\newenvironment{newreferences}
               {\section*{References}
                \begin{list}{}{\setlength{\itemsep}{0pt}
                               \setlength{\parsep}{0pt}
                               \setlength{\labelwidth}{0pt}
                               \setlength{\leftmargin}{12pt}
                               \setlength{\labelsep}{0pt}}
                \setlength{\itemindent}{-12pt}
               }{\end{list}
               }
\newcommand{\Z}{{\bf Z}}
\theoremstyle{plain}
\newtheorem{theorem}{Theorem}
\newtheorem{lemma}[theorem]{Lemma}
\theoremstyle{remark}
\title{Spatial Parrondo games \\ and an interacting particle system}
\author{Sung Chan Choi\thanks{Department of Mathematics, University of Utah, 155 S. 1400 E., Salt Lake City, UT 84112, USA. e-mail: choi@math.utah.edu}}
\date{}							
\begin{document}
\maketitle

\begin{abstract}
Parrondo games with spatial dependence were introduced by Toral (2001) and have been studied extensively.  In Toral's model $N$ players are arranged in a circle.  The players play either game $A$ or game $B$. In game $A$, a randomly chosen player wins or loses one unit according to the toss of a fair coin.  In game $B$, which depends on parameters $p_0,p_1,p_2\in[0,1]$, a randomly chosen player, player $x$ say, wins or loses one unit according to the toss of a $p_m$-coin, where $m\in\{0,1,2\}$ is the number of nearest neighbors of player $x$ who won their most recent game.  In this paper, we replace game $A$ by a spatially dependent game, which we call game $A'$, introduced by Xie et al.~(2011).  In game $A'$, two nearest neighbors are chosen at random, and one pays one unit to the other based on the toss of a fair coin.  Game $A'$ is fair, so we say that the Parrondo effect occurs if game $B$ is losing or fair and the game $C'$, determined by a random or periodic sequence of games $A'$ and $B$, is winning.  Here we give sufficient conditions for convergence as $N\to\infty$ of the mean profit per game played from game $C'$.  This requires ergodicity of an associated interacting particle system (not necessarily a spin system), for which sufficient conditions are found using the basic inequality.
\end{abstract}

\section{Introduction}\label{sec-intro}

Toral (2001) introduced what he called \textit{cooperative} Parrondo games with spatial dependence.  (We prefer the term \textit{spatially dependent} Parrondo games so as to avoid conflict with the field of cooperative game theory.)  The games depend on an integer parameter $N\ge3$, the number of players, and four probability parameters, $p_0,p_1,p_2,p_3$.  (This is a slight generalization of the model described in the abstract.) The players are arranged in a circle and labeled from 1 to $N$, so that players 1 and $N$ are adjacent.  At each turn, a player is chosen at random to play.  Suppose player $x$ is chosen.  In game $A$, he tosses a fair coin.  In game $B$, he tosses a $p_m$-coin (i.e., a coin whose probability of heads is $p_m$), where $m\in\{0,1,2,3\}$ depends on the winning or losing status of his two nearest neighbors.  A player's status as winner (1) or loser (0) is decided by the result of his most recent game.  Specifically,
$$
m=\begin{cases}0&\text{if $x-1$ and $x+1$ are both losers,}\\
               1&\text{if $x-1$ is a loser and $x+1$ is a winner,}\\
               2&\text{if $x-1$ is a winner and $x+1$ is a loser,}\\
               3&\text{if $x-1$ and $x+1$ are both winners,}\end{cases}
$$
where $N+1:=1$ and $0:=N$ because of the circular arrangement of players.
Player $x$ wins one unit with heads and loses one unit with tails.

Game $A$ is fair, so we say that the Parrondo effect occurs if game $B$ is losing or fair and the game $C$, determined by a random or periodic sequence of games $A$ and $B$, is winning.

These games have been studied by Mihailovi\'c and Rajkovi\'c (2003) and Ethier and Lee (2012a,b, 2013a,b), and a strong law of large numbers and a central limit theorem were obtained.  In particular, the asymptotic cumulative profits per turn exist and are the means in the SLLN.  Further, it seems clear that these means converges as $N\to\infty$.  This has been proved under certain conditions (see Ethier and Lee, 2013a).

Notice that Toral's (2001) game $A$ is not spatially dependent (the rules of the game do not depend on the spatial structure of the players). Xie et al.~(2011) proposed a modification of game $A$ that \textit{is} spatially dependent as well as being a fair game.  To distinguish, we call that game $A'$.  As before, the games depend on an integer parameter $N\ge3$, the number of players, and four probability parameters, $p_0,p_1,p_2,p_3$.  The players are arranged in a circle and labeled from $1$ to $N$, so that players $1$ and $N$ are adjacent.  At each turn, a player is chosen at random to play.  Suppose player $x$ is chosen.  In game $A'$, he chooses one of his two nearest neighbors at random and competes with that neighbor by tossing a fair coin.  The result is a transfer of one unit from one of the players to the other, hence the wealth of the set of $N$ players is unchanged.  Game $B$ is as before.  Player $x$ wins one unit with heads and loses one unit with tails. 

Game $A'$ is fair, so we say that the Parrondo effect occurs if game $B$ is losing or fair and the game $C'$, determined by a random or periodic sequence of games $A'$ and $B$, is winning.

These games were studied by Xie et al.~(2011), Li et al.~(2014), and Ethier and Lee (2015).  Only the case of random sequences was treated, and convergence of the means has not yet been addressed.  Our aim in this paper, together with Choi (2020), is to fill in these gaps in the literature.

The Markov chain formalized by Mihailovi\'c and Rajkovi\'c (2003) keeps track of the status (loser or winner, 0 or 1) of each of the $N\ge3$ players of game $B$. Its state space is the product space 
$$
\{\eta=(\eta(1),\eta(2),\ldots,\eta(N)): \eta(x)\in\{0,1\}{\rm\ for\ }x=1,\ldots,N\}=\{0,1\}^N
$$
with $2^N$ states.  Let $m_x(\eta):=2\eta(x-1)+\eta(x+1)\in\{0,1,2,3\}$.  Of course $\eta(0):=\eta(N)$ and $\eta(N+1):=\eta(1)$ because of the circular arrangement of players.  Also, let $\eta_x$ be the element of $\{0,1\}^N$ equal to $\eta$ except at the $x$th coordinate.  For example, $\eta_1:=(1-\eta(1),\eta(2),\eta(3),\ldots,\eta(N))$.

The one-step transition matrix $\bm P_B$ for this Markov chain depends not only on $N$ but on four parameters, $p_0,p_1,p_2,p_3\in[0,1]$.  It has the form
\begin{equation*}
P_B(\eta,\eta_x):=\begin{cases}N^{-1}p_{m_x(\eta)}&\text{if $\eta(x)=0$,}\\N^{-1}q_{m_x(\eta)}&\text{if $\eta(x)=1$,}\end{cases}\qquad x=1,\ldots,N,\;\eta\in \{0,1\}^N,
\end{equation*}
and 
\begin{equation*}
P_B(\eta,\eta):=N^{-1}\bigg(\sum_{x:\eta(x)=0}q_{m_x(\eta)}+\sum_{x:\eta(x)=1}p_{m_x(\eta)}\bigg),\qquad \eta\in\{0,1\}^N,
\end{equation*}
where $q_m:=1-p_m$ for $m=0,1,2,3$ and empty sums are 0.  The Markov chain is irreducible and aperiodic if $0<p_m<1$ for $m=0,1,2,3$.  Under slightly weaker assumptions (see Ethier and Lee 2013a), the Markov chain is ergodic, which suffices.  For example, if $p_0$ is arbitrary and $0<p_m<1$ for $m=1,2,3$, or if $0<p_m<1$ for $m=0,1,2$ and $p_3$ is arbitrary, then ergodicity holds.

A Markov chain in the same state space corresponds to game $A'$.  As previously mentioned, if game $A'$ is played, the profit to the set of $N$ players is 0, since game $A'$ simply redistributes capital among the players.  The transition probabilities require some new notation.  Starting from state $\eta$, let $\eta^{x,x\pm1,\pm1}\in\{0,1\}^N$ represent the players' status after player $x$ plays player $x\pm1$ and wins (1) or loses ($-1$).  Of course player 0 is player $N$ and player $N+1$ is player 1.  For example, $\eta^{1,2,-1}=(0,1,\eta(3),\ldots,\eta(N))$ (player 1 competes against player 2 and loses, leaving player 1 a loser and player 2 a winner, regardless of their previous status).  The transition probabilities have the form
\begin{align*}
&P_{A'}(\eta,\zeta)\\
&:=\frac{1}{4N}\sum_{x=1}^N[\delta(\eta^{x,x-1,-1},\zeta)+\delta(\eta^{x,x-1,1},\zeta)+\delta(\eta^{x,x+1,-1},\zeta)+\delta(\eta^{x,x+1,1},\zeta)],
\end{align*}
where $\delta(\eta,\zeta)=1$ if $\eta=\zeta$ and $=0$ otherwise.

Next, we want to regard the players, originally labeled from 1 to $N$, as labeled from $l_N$ to $r_N$, where
\begin{equation}\label{relabel}
l_N: = \begin{cases}
-(N-1)/2 &\text{if $N$ is odd,}\\
-N/2 &\text{if $N$ is even,}
\end{cases} \quad
r_N: = \begin{cases}
(N-1)/2 &\text{if $N$ is odd,}\\
N/2-1 &\text{if $N$ is even.}
\end{cases}
\end{equation}
Then we can speed up time, playing $N$ games per unit of time, and our process is described in the limit as $N\to\infty$ by an interacting particle system in the state space $\Sigma:=\{0,1\}^\Z$.  The details of this limit operation are postponed to Section \ref{sec-convergence}.  Initially, our concern is with the ergodicity of the limiting interacting particle system.

The generator $\Omega_{A'}$ of the interacting particle system corresponding to game $A'$ can be described as follows.  For $\eta\in\Sigma:=\{0,1\}^\Z$ and $x\in\Z$, define $\eta_x$ and $_x\eta_{x+1}$ in $\Sigma$ by
\begin{equation*}
\eta_x(y):=\begin{cases}1-\eta(x)&\text{if $y=x$,}\\ \eta(y)&\text{otherwise,}\end{cases}\quad\text{and}\quad
_x\eta_{x+1}(y):=\begin{cases}\eta(x+1)&\text{if $y=x$,}\\ \eta(x)&\text{if $y=x+1$,}\\ \eta(y)&\text{otherwise.}\end{cases}
\end{equation*}
Then
\begin{equation*}
(\Omega_{A'} f)(\eta):=\sum_x c'(x,\eta)[f(\eta_x)-f(\eta)]+\frac12\sum_x[f(_x\eta_{x+1})-f(\eta)]
\end{equation*}
for $f\in C(\Sigma)$ depending on only finitely many coordinates, where 
\begin{equation}\label{c'-def}
c'(x,\eta):=\frac{1}{2}[\bm{1}_{\{\eta(x)=\eta(x+1)\}}+\bm{1}_{\{\eta(x)=\eta(x-1)\}}].
\end{equation}
Note that the generator $\Omega_{A'}$ is the sum of a spin system generator and an exclusion process generator.

The generator $\Omega_B$ of the interacting particle system corresponding to game $B$ can be described as follows.  For each $x\in\Z$, define $\eta_x$ as above.  Then, given parameters $p_0,p_1,p_2,p_3\in[0,1]$,
\begin{equation*}
(\Omega_{B} f)(\eta):=\sum_x c(x,\eta)[f(\eta_x)-f(\eta)]
\end{equation*} 
for $f\in C(\Sigma)$ depending on only finitely many coordinates, where 
\begin{equation}\label{c-def}
c(x,\eta):=\begin{cases}p_{m_{x}(\eta)}&\text{if $\eta(x)=0$,}\\ q_{m_{x}(\eta)}&\text{if $\eta(x)=1,$}\end{cases}
\end{equation}
$q_m:=1-p_m$ for $m=0,1,2,3$, and $m_x(\eta):=2\eta(x-1)+\eta(x+1)\in\{0,1,2,3\}$.

Finally, the generator $\Omega_{C'}$ of the interacting particle system corresponding to game $C':=\gamma A'+(1-\gamma)B$ (denoting the random mixture of games $A'$ and $B$, i.e., the game in which a $\gamma$-coin determines whether game $A'$ or game $B$ is played) is
$$
\Omega_{C'}:=\gamma\Omega_{A'}+(1-\gamma)\Omega_B.
$$
The generator $\Omega_{C'}$, just like $\Omega_{A'}$, is the sum of a spin system generator and an exclusion process generator.

\section{Ergodicity}\label{sec-basic}

First, $\Omega_B$ is a spin system generator, so the basic inequality yields a sufficient condition for ergodicity (Liggett, 1985, Eq.~(III.0.6)):
\begin{equation}\label{erg-basic-B}
\sup_{x \in \Z}\sum_{u:u\neq x}\sup_{\eta \in \Sigma} |c(x,\eta)-c(x,\eta_u)| < \inf_{x \in \Z,\; \eta \in \Sigma}[c(x,\eta)+c(x,\eta_{x})],
\end{equation}
where $c(x,\eta)$ is as in \eqref{c-def}.

Next, $\Omega_{C'}$, given by   
\begin{align}\label{Omega-C'}
(\Omega_{C'} f)(\eta)&:=\gamma(\Omega_{A'}f)(\eta)+(1-\gamma)(\Omega_Bf)(\eta)\nonumber\\
&\;=\gamma\sum_x c'(x,\eta)[f(\eta_x)-f(\eta)]+\frac{\gamma}{2}\sum_x[f(_x\eta_{x+1})-f(\eta)]\nonumber\\
&\qquad\qquad+(1-\gamma)\sum_xc(x,\eta)[f(\eta_x)-f(\eta)],
\end{align} 
is not a spin system generator, so \eqref{erg-basic-B} does not apply.  Instead, we apply a more general form of the basic inequality. It assumes that the generator has the form
\begin{equation}\label{Omega-general}
(\Omega f)(\eta)=\sum_{T\subset\Z\text{ finite}}\int_{\{0,1\}^T}c_T(\eta,d\zeta)[f(\eta^\zeta)-f(\eta)],
\end{equation}
where $c_T(\eta,d\zeta)$ is a finite positive measure on $\{0,1\}^T$, and 
$$
\eta^{\zeta}(x):=\begin{cases}\zeta(x)&\text{if $x \in T$}\\ \eta(x)&\text{if $x \notin T$}\end{cases}
$$
for $\zeta$ $\in \{0,1\}^T$.  The interpretation is that $\eta$ is the current configuration, $c_T(\eta,\{0,1\}^T)$ is the rate at which a transition occurs involving coordinates in $T$, and $c_T(\eta,d\zeta)/c_T(\eta,\{0,1\}^T)$ is the distribution of the restriction to $T$ of the new configuration after a transition.  

We conclude from \eqref{Omega-C'} and \eqref{Omega-general} that
\begin{align*}
c_{\{x\}}(\eta,G)&=\delta_{1-\eta(x)}(G)[\gamma c'(x,\eta)+(1-\gamma) c(x,\eta)],\\
c_{\{x,x+1\}}(\eta,H)&=\delta_{(\eta(x+1),\eta(x))}(H)\frac{\gamma}{2}\bm1_{\{\eta(x)\ne\eta(x+1)\}},
\end{align*}
where $\delta_u$ is the unit mass concentrated at $u$.  Here $G\subset\{0,1\}$ and $H\subset\{(0,0),(0,1),(1,0),(1,1)\}$.

The sufficient condition for ergodicity is that $M<\varepsilon$ (Liggett, 1985, Theorem I.4.1), where $M$ and $\varepsilon$ are constants, defined below, that remain to be evaluated.

For $u\in {\bf Z}$ and finite $T\subset {\bf Z}$, let $c_T(u):=\sup \{\|c_T(\eta,d\zeta)-c_T(\eta',d\zeta)\|_{\text{TV}}: \eta,\eta'\in\Sigma,\eta(y)=\eta'(y)\,\forall\,y\neq u\}$, where $\|\cdot\|_{\text{TV}}$ denotes the total variation norm of a measure on $\{0,1\}^T$.  Then
\begin{align} \label{M-for-C'}
M &:=\sup_{x \in \Z}\sum_{T \ni x}\sum_{u: u \neq x}c_T(u)\nonumber\\ 
&=\sup_{x \in \Z}\sum_{T \ni x}\sum_{u: u \neq x}\sup_{\eta,\eta'\in\Sigma,\eta(y)=\eta'(y)\,\forall\,y\neq u}\|c_T(\eta,d\zeta)-c_T(\eta',d\zeta)\|_{\text{TV}}\nonumber\\ 
&=\sup_{x \in \Z}\bigg[\sum_{u:u\neq x}\sup_{\eta \in \Sigma}\|c_{\{x\}}(\eta,d\zeta)-c_{\{x\}}(\eta_u,d\zeta)\|_{\text{TV}}\nonumber\\
&\qquad\;\;{}+\sum_{v: v\neq x}\sup_{\eta \in \Sigma}\|c_{\{x,x+1\}}(\eta,d\zeta)-c_{\{x,x+1\}}(\eta_v,d\zeta)\|_{\text{TV}}\nonumber\\
&\qquad\;\;{}+\sum_{w: w\neq x}\sup_{\eta \in \Sigma}\|c_{\{x-1,x\}}(\eta,d\zeta)-c_{\{x-1,x\}}(\eta_w,d\zeta)\|_{\text{TV}}\bigg]\nonumber\\ 
&=\sup_{x \in \Z}\bigg[\sum_{u:u\neq x}\sup_{\eta \in \Sigma} \sup_{G \subset \{0,1\}}|c_{\{x\}}(\eta,G)-c_{\{x\}}(\eta_u,G)|\nonumber\\
&\qquad\;\;{}+\sum_{v: v\neq x}\sup_{\eta \in \Sigma}\sup_{H \subset \{(0,0),(0,1),(1,0),(1,1)\}}|c_{\{x,x+1\}}(\eta,H)-c_{\{x,x+1\}}(\eta_v,H)|\nonumber\\
&\qquad\;\;{}+\sum_{w: w\neq x}\sup_{\eta \in \Sigma}\sup_{H \subset \{(0,0),(0,1),(1,0),(1,1)\}}|c_{\{x-1,x\}}(\eta,H)-c_{\{x-1,x\}}(\eta_w,H)|\bigg]\nonumber\\ 
&=\sup_{x \in \Z}\bigg[\sum_{u:u\neq x}\sup_{\eta \in \Sigma} |\gamma c'(x,\eta)+(1-\gamma) c(x,\eta)-[\gamma c'(x,\eta_{u})+(1-\gamma) c(x,\eta_{u})]|\nonumber\\
&\qquad\;\;{}+\sup_{\eta \in \Sigma}\sup_{H \subset \{(0,0),(0,1),(1,0),(1,1)\}}|c_{\{x,x+1\}}(\eta,H)-c_{\{x,x+1\}}(\eta_{x+1},H)|\nonumber\\
&\qquad\;\;{}+\sup_{\eta \in \Sigma}\sup_{H \subset \{(0,0),(0,1),(1,0),(1,1)\}}|c_{\{x-1,x\}}(\eta,H)-c_{\{x-1,x\}}(\eta_{x-1},H)|\bigg]\nonumber\\ 
&=\sup_{x \in \Z}\sum_{u=x\pm1}\sup_{\eta \in \Sigma} |\gamma c'(x,\eta)+(1-\gamma) c(x,\eta)-[\gamma c'(x,\eta_{u})+(1-\gamma) c(x,\eta_{u})]|\nonumber\\
&\qquad\qquad\qquad{}+2\cdot\frac{\gamma}{2}\nonumber\\
&=\sup_{x \in \Z}\bigg[\sup_{\eta \in \Sigma} |\gamma [c'(x,\eta)-c'(x,\eta_{x+1})]+(1-\gamma)[c(x,\eta)- c(x,\eta_{x+1})]|\nonumber\\
&\qquad\qquad{}+\sup_{\eta \in \Sigma} |\gamma [c'(x,\eta)-c'(x,\eta_{x-1})]+(1-\gamma)[c(x,\eta)- c(x,\eta_{x-1})]|\bigg]+\gamma\nonumber\\
&=\max\bigg[ \bigg|\frac{\gamma}{2}+(1-\gamma)(p_0-p_1)\bigg|,\bigg|\frac{\gamma}{2}+(1-\gamma)(p_2-p_3)\bigg|\bigg]\nonumber\\ 
&\qquad{} + \max\bigg[\bigg|\frac{\gamma}{2}+(1-\gamma)(p_0-p_2)\bigg|,\bigg|\frac{\gamma}{2}+(1-\gamma)(p_1-p_3)\bigg|\bigg]+\gamma,
\end{align}
where the last step requires clarification.  Notice first that
$$ 
c'(x,\eta)=\frac{1}{2}[ \bm{1}_{\{\eta(x)=\eta(x+1)\}}+ \bm{1}_{\{\eta(x)=\eta(x-1)\}}],
$$
\begin{align*}
c'(x,\eta_{x+1})&=\frac{1}{2}[ \bm{1}_{\{\eta_{x+1}(x)=\eta_{x+1}(x+1)\}}+ \bm{1}_{\{\eta_{x+1}(x)=\eta_{x+1}(x-1)\}}]\\
&=\frac{1}{2}[ \bm{1}_{\{\eta(x)=1-\eta(x+1)\}}+ \bm{1}_{\{\eta(x)=\eta(x-1)\}}],
\end{align*}
\begin{align*}
 c'(x,\eta_{x-1})&=\frac{1}{2}[ \bm{1}_{\{\eta_{x-1}(x)=\eta_{x-1}(x+1)\}}+ \bm{1}_{\{\eta_{x-1}(x)=\eta_{x-1}(x-1)\}}]\\
 &=\frac{1}{2}[ \bm{1}_{\{\eta(x)=\eta(x+1)\}}+ \bm{1}_{\{\eta(x)=1-\eta(x-1)\}}],
\end{align*}
$$
c(x,\eta)=\begin{cases}p_{m_{x}(\eta)}&\text{if $\eta(x)=0$,}\\ q_{m_{x}(\eta)}&\text{if $\eta(x)=1$,}\end{cases}
$$
$$ 
c(x,\eta_{x+1})=\begin{cases}p_{m_{x}(\eta_{x+1})}&\text{if $\eta(x)=0$,}\\ q_{m_{x}(\eta_{x+1})}&\text{if $\eta(x)=1$,}\end{cases}\quad
c(x,\eta_{x-1})=\begin{cases}p_{m_{x}(\eta_{x-1})}&\text{if $\eta(x)=0$,}\\ q_{m_{x}(\eta_{x-1})}&\text{if $\eta(x)=1$,}\end{cases}
$$
where $q_m:=1-p_m$ for $m=0,1,2,3$ and $m_{x}(\eta):=2\eta(x-1)+\eta(x+1) \in \{0,1,2,3\}$.  The last line of \eqref{M-for-C'} is by direct calculation.\bigskip

Next, 
\begin{align} \label{eps-for-C'}
\varepsilon &:=\inf_{u \in \Z}\;\inf_{\eta=\eta'\text{ off }u,\;\eta(u)\neq\eta'(u)}
\sum_{T \ni u} [c_T(\eta,\{\zeta\in \{0,1\}^{T}: \zeta(u)=\eta'(u)\})\nonumber\\
&\qquad\qquad\qquad\qquad\qquad\qquad\qquad{}+c_T(\eta',\{\zeta\in\{0,1\}^{T}:\zeta(u)=\eta(u)\})]\nonumber\\
&=\inf_{u \in \Z} \inf_{\eta\in \Sigma}\big[c_{\{u\}}(\eta,\{\eta_u(u)\})+c_{\{u\}}(\eta_u,\{\eta(u)\})\nonumber\\
&\qquad\qquad\quad{}+c_{\{u,u+1\}}(\eta,\{\zeta\in\{0,1\}^{\{u,u+1\}}:\zeta(u)=\eta_{u}(u)\})\nonumber\\
&\qquad\qquad\quad{}+c_{\{u,u+1\}}(\eta_{u},\{\zeta\in\{0,1\}^{\{u,u+1\}}:\zeta(u)=\eta(u)\})\nonumber\\
&\qquad\qquad\quad{}+c_{\{u-1,u\}}(\eta,\{\zeta\in\{0,1\}^{\{u-1,u\}}:\zeta(u)=\eta_{u}(u)\})\nonumber\\
&\qquad\qquad\quad{}+c_{\{u-1,u\}}(\eta_{u},\{\zeta\in\{0,1\}^{\{u-1,u\}}:\zeta(u)=\eta(u)\})\big]\nonumber\\
&=\inf_{u \in \Z} \inf_{\eta \in \Sigma}\big[c_{\{u\}}(\eta, \{1-\eta (u)\})+c_{\{u\}}(\eta_{u},\{1-\eta_{u}(u)\})\nonumber\\
&\qquad\qquad\quad{}+c_{\{u,u+1\}}(\eta,\{\zeta\in\{0,1\}^{\{u,u+1\}}:\zeta(u)=1-\eta(u)\})\nonumber\\
&\qquad\qquad\quad{}+c_{\{u,u+1\}}(\eta_{u},\{\zeta\in\{0,1\}^{\{u,u+1\}}:\zeta(u)=1-\eta_u(u)\})\nonumber\\
&\qquad\qquad\quad{}+c_{\{u-1,u\}}(\eta,\{\zeta\in\{0,1\}^{\{u-1,u\}}:\zeta(u)=1-\eta(u)\})\nonumber\\
&\qquad\qquad\quad{}+c_{\{u-1,u\}}(\eta_{u},\{\zeta\in\{0,1\}^{\{u-1,u\}}:\zeta(u)=1-\eta_u(u)\})\big]\nonumber\\
&=\inf_{u \in \Z} \inf_{\eta \in \Sigma}\big[\gamma[c'(u,\eta)+c'(u,\eta_{u})]+(1-\gamma)[c(u,\eta)+c(u,\eta_{u})]\big]+2\cdot\frac{\gamma}{2}\nonumber\\
&=1+\gamma,
\end{align} 
where the last line of \eqref{eps-for-C'} follows from
\begin{align*}
c'(u,\eta)+c'(u,\eta_{u})&=\frac{1}{2}[ \bm{1}_{\{\eta(u)=\eta(u+1)\}}+ \bm{1}_{\{\eta(u)=\eta(u-1)\}}]\\
&\quad{}+\frac{1}{2}[ \bm{1}_{\{\eta_{u}(u)=\eta_{u}(u+1)\}}+ \bm{1}_{\{\eta_{u}(u)=\eta_{u}(u-1)\}}]\\
&=\frac{1}{2}[ \bm{1}_{\{\eta(u)=\eta(u+1)\}}+ \bm{1}_{\{\eta(u)=\eta(u-1)\}}]\\
&\quad{}+\frac{1}{2}[ \bm{1}_{\{\eta(u)=1-\eta(u+1)\}}+ \bm{1}_{\{\eta(u)=1-\eta(u-1)\}}]\\
&=1
\end{align*}
and $c(u,\eta)+c(u,\eta_{u})=1$.  We have proved the following theorem.

\begin{theorem}
The interacting particle system in $\Sigma:=\{0,1\}^\Z$ with generator $\Omega_{C'}:=\gamma\Omega_{A'}+(1-\gamma)\Omega_B$, where $0<\gamma<1$, is ergodic if
\begin{align}\label{erg-ineq}
&\max\bigg[ \bigg|\frac{\gamma}{2}+(1-\gamma)(p_0-p_1)\bigg|,\bigg|\frac{\gamma}{2}+(1-\gamma)(p_2-p_3)\bigg|\bigg]\nonumber\\ 
&\qquad + \max\bigg[\bigg|\frac{\gamma}{2}+(1-\gamma)(p_0-p_2)\bigg|,\bigg|\frac{\gamma}{2}+(1-\gamma)(p_1-p_3)\bigg|\bigg]<1.
\end{align}
\end{theorem}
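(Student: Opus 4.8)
The plan is to invoke the general form of the basic inequality (Liggett, 1985, Theorem I.4.1), since, as observed after \eqref{Omega-C'}, the generator $\Omega_{C'}$ is not a spin system generator and the simpler spin-system criterion \eqref{erg-basic-B} does not apply. That theorem guarantees ergodicity once the generator is written in the form \eqref{Omega-general} and the two constants $M$ and $\varepsilon$ satisfy $M<\varepsilon$. So the whole argument reduces to four steps: (i) exhibit the transition measures $c_T(\eta,d\zeta)$; (ii) evaluate $M$; (iii) evaluate $\varepsilon$; and (iv) check that $M<\varepsilon$ collapses to \eqref{erg-ineq}.

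First I would read off from \eqref{Omega-C'} the only nonzero measures: the single-site measures $c_{\{x\}}(\eta,\cdot)=\delta_{1-\eta(x)}[\gamma c'(x,\eta)+(1-\gamma)c(x,\eta)]$, which merge the spin-flip contributions of games $A'$ and $B$, and the swap measures $c_{\{x,x+1\}}(\eta,\cdot)=\delta_{(\eta(x+1),\eta(x))}\,\tfrac{\gamma}{2}\bm{1}_{\{\eta(x)\ne\eta(x+1)\}}$ coming from the exclusion part of $A'$; all larger $T$ contribute zero. For $M$ I would note that $c'(x,\cdot)$ and $c(x,\cdot)$ depend only on coordinates $x-1,x,x+1$, so in the single-site term the inner sum over $u\ne x$ reduces to $u\in\{x-1,x+1\}$, while each swap measure is sensitive to a flip at exactly one off-diagonal neighbor ($x+1$ for $\{x,x+1\}$, $x-1$ for $\{x-1,x\}$), contributing the fixed rate $\gamma/2$ apiece and hence $\gamma$ in total. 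The substantive computation is the single-site piece $\sup_\eta|\gamma[c'(x,\eta)-c'(x,\eta_{x\pm1})]+(1-\gamma)[c(x,\eta)-c(x,\eta_{x\pm1})]|$: from the explicit indicator forms one gets $c'(x,\eta)-c'(x,\eta_{x+1})=\pm\tfrac12$ according to whether $\eta(x)=\eta(x+1)$, and $c(x,\eta)-c(x,\eta_{x+1})$ is a difference $p_m-p_{m'}$ of $B$-parameters whose indices differ only in the $x+1$ neighbor; maximizing over $\eta$ yields the first $\max[\,|\cdot|,|\cdot|\,]$ block, and the flip at $x-1$ yields the second, giving exactly the $M$ of \eqref{M-for-C'}.

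For $\varepsilon$ the key simplification is the pair of identities $c'(u,\eta)+c'(u,\eta_u)=1$ and $c(u,\eta)+c(u,\eta_u)=1$, valid for every $\eta$, which force the single-site contribution to equal $\gamma\cdot1+(1-\gamma)\cdot1=1$ with no dependence on the parameters; a short case check on the swap measures, splitting on whether $\eta(u)=\eta(u\pm1)$, shows each of $\{u,u+1\}$ and $\{u-1,u\}$ contributes $\gamma/2$, so that $\varepsilon=1+\gamma$ as in \eqref{eps-for-C'}. Finally $M<\varepsilon$ reads (the $M$-expression)$\,+\gamma<1+\gamma$, and cancelling $\gamma$ gives precisely \eqref{erg-ineq}.

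The only delicate step is the evaluation of $M$. One must track the sign of the spin-flip difference $\pm\tfrac12$ as $\eta$ ranges over configurations and verify that it always pairs with the matching $B$-parameter difference $p_m-p_{m'}$ with a \emph{fixed} relative sign, so that each achievable value is $\pm[\tfrac{\gamma}{2}+(1-\gamma)(p_m-p_{m'})]$ and the suprema over $\eta$ collapse to exactly the absolute values appearing in \eqref{M-for-C'}, rather than to some strictly larger combination such as $|{-\tfrac{\gamma}{2}}+(1-\gamma)(p_m-p_{m'})|$. Establishing this sign bookkeeping, together with confirming that the single-site measures are insensitive to flips outside $\{x-1,x+1\}$ and that the swap measures reduce to the clean $\gamma/2$ contributions, is the whole of the work; everything else is the routine assembly carried out in \eqref{M-for-C'} and \eqref{eps-for-C'}.
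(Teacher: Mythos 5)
Your proposal is correct and follows essentially the same route as the paper: writing $\Omega_{C'}$ in the form \eqref{Omega-general} with the single-site measures $c_{\{x\}}$ and swap measures $c_{\{x,x\pm1\}}$, computing $M$ as in \eqref{M-for-C'} and $\varepsilon=1+\gamma$ as in \eqref{eps-for-C'} via $c'(u,\eta)+c'(u,\eta_u)=c(u,\eta)+c(u,\eta_u)=1$, and cancelling $\gamma$ in $M<\varepsilon$ to obtain \eqref{erg-ineq}. You also correctly isolate the one delicate point --- that the $\pm\tfrac12$ spin-flip difference and the $B$-parameter difference $p_m-p_{m'}$ always flip sign together, so only $\big|\tfrac{\gamma}{2}+(1-\gamma)(p_m-p_{m'})\big|$ and never a mixed-sign combination arises --- which is exactly the ``direct calculation'' the paper performs with the explicit indicator formulas.
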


The volume of the subset of the parameter space $[0,1]^4$ for which \eqref{erg-ineq} holds with $\gamma=1/2$ is 5/6.  If we assume that $p_1=p_2$, then the volume of the subset of the parameter space $[0,1]^3$ for which \eqref{erg-ineq} holds with $\gamma=1/2$ is 3/4.  In fact, the volume is $3/4$ if and only if $\gamma \ge 1/3$.

\section{Convergence of means}\label{sec-convergence}

We would like to prove that $\lim_{N\rightarrow \infty}\mu^N_{(\gamma,1-\gamma)'}$ and $\lim_{N\rightarrow \infty}\mu^N_{[r,s]'}$ exist under certain conditions, where $\mu^N_{(\gamma,1-\gamma)'}$ denotes the mean profit per turn at equilibrium to the $N$ players playing the $(\gamma,1-\gamma)$ random mixture of games $A'$ and $B$ (the Parrondo games of Xie et al., 2011), and $\mu^N_{[r,s]'}$ denotes the mean profit per turn at equilibrium to the $N$ players playing games $A'$ and $B$ in the nonrandom periodic pattern $A', A', \dots, A'$ ($r$ times), $B, B, \dots, B$ ($s$ times), $A', A', \dots, A'$ ($r$ times), $B, B, \dots, B$ ($s$ times), and so on.  The first result is relatively straightforward, while the second requires more work.  The key step for the second result is to prove that the sequence of discrete generators converges to the generator of an interacting particle system.  

We want to show that our sequence of discrete-time Markov chains, suitably rescaled, converges in distribution to an interacting particle system on $\Z$. The limiting process is characterized in terms of its generator. First, we need to define generators corresponding to  game $A'$, game $B$, and game $C'$.  The state space is 
\begin{align*}
\Sigma&:=\{0,1\}^{\Z}\\
&\phantom{:}=\{\eta=(\ldots,\eta(-2),\eta(-1),\eta(0),\eta(1),\eta(2),\ldots):\eta(x)\in\{0,1\}\text{ for all }x\in\Z\}.
\end{align*}
In Section~\ref{sec-intro} we defined $\eta^{x,x\pm1,\pm1}$ for $\eta\in\{0,1\}^N$, and that definition is easily extended to $\eta\in\Sigma$.  For $\eta\in\Sigma$ and $x\in\Z$ define $\eta^{x,-1}$ and  $\eta^{x,1}$ to be the elements of $\Sigma$ given by
$$
\eta^{x,-1}(y):=\begin{cases}1&\text{if $y=x-1$,}\\ 0&\text{if $y=x$,}\\ \eta(y)&\text{otherwise,}\end{cases}\qquad
\eta^{x,1}(y):=\begin{cases}0&\text{if $y=x$,}\\ 1&\text{if $y=x+1$,}\\ \eta(y)&\text{otherwise.}\end{cases}
$$
For example, $\eta^{0,1}:=(\ldots,\eta(-2),\eta(-1), 0, 1, \eta(2),\eta(3), \ldots)$. And let $\eta_x$ be the element of $\Sigma$ equal to $\eta$ except at the $x$th coordinate.  Then the generators are 
\begin{align*}
&(\Omega_{A'}f)(\eta)\\
&:=\sum_{x\in\Z}\bigg[\frac14 f(\eta^{x,x-1,-1})+\frac14 f(\eta^{x,x-1,1})+\frac14 f(\eta^{x,x+1,-1})+\frac14 f(\eta^{x,x+1,1})-f(\eta)\bigg]\\
&=\frac12\sum_{x\in\Z}[f(\eta^{x-1,x,1})-f(\eta)]+\frac12\sum_{x\in\Z}[f(\eta^{x,x+1,-1})-f(\eta)]\\
&=\frac12\bigg[\sum_{x:(\eta(x-1),\eta(x))=(0,0)}[f(\eta_{x-1})-f(\eta)]+\sum_{x:(\eta(x-1),\eta(x))=(0,1)}[f(_{x-1}\eta_x)-f(\eta)]\\
&\quad{}+\sum_{x:(\eta(x-1),\eta(x))=(1,1)}[f(\eta_x)-f(\eta)]+\sum_{x:(\eta(x),\eta(x+1))=(0,0)}[f(\eta_{x+1})-f(\eta)]\\
&\quad{}+\sum_{x:(\eta(x),\eta(x+1))=(1,0)}[f(_x\eta_{x+1})-f(\eta)]+\sum_{x:(\eta(x),\eta(x+1))=(1,1)}[f(\eta_x)-f(\eta)]\bigg]\\
&=\frac12\bigg[\sum_{x:(\eta(x),\eta(x+1))=(0,0)}[f(\eta_x)-f(\eta)]+\sum_{x:(\eta(x),\eta(x+1))=(0,1)}[f(_x\eta_{x+1})-f(\eta)]\\
&\quad{}+\sum_{x:(\eta(x),\eta(x+1))=(1,1)}[f(\eta_{x+1})-f(\eta)]+\sum_{x:(\eta(x),\eta(x+1))=(0,0)}[f(\eta_{x+1})-f(\eta)]\\
&\quad{}+\sum_{x:(\eta(x),\eta(x+1))=(1,0)}[f(_x\eta_{x+1})-f(\eta)]+\sum_{x:(\eta(x),\eta(x+1))=(1,1)}[f(\eta_x)-f(\eta)]\bigg]\\
&=\frac12\bigg[\sum_{x:\eta(x)=\eta(x+1)}[f(\eta_x)-f(\eta)]+\sum_{x:\eta(x)=\eta(x+1)}[f(\eta_{x+1})-f(\eta)]\\
&\qquad\quad{}+\sum_{x\in\Z}[f(_x\eta_{x+1})-f(\eta)]\bigg]\\
&=\frac12\bigg[\sum_{x:\eta(x)=\eta(x+1)}[f(\eta_x)-f(\eta)]+\sum_{x:\eta(x-1)=\eta(x)}[f(\eta_x)-f(\eta)]\\
&\qquad\quad{}+\sum_{x\in\Z}[f(_x\eta_{x+1})-f(\eta)]\bigg]\\
&=\sum_{x\in\Z} c'(x,\eta)[f(\eta_x)-f(\eta)]+\frac12\sum_{x\in\Z}[f(_x\eta_{x+1})-f(\eta)],
\end{align*}
where $c'(x,\eta)$ is as in \eqref{c'-def}, 
\begin{equation*}
(\Omega_{B}f)(\eta):=\sum_{x \in \Z}c(x,\eta)\big[f(\eta_x)-f(\eta)\big],
\end{equation*}
where $c(x,\eta)$ is as in \eqref{c-def}, and
\begin{equation*}
(\Omega_{C'}f)(\eta):=\big[\gamma\Omega_{A'}f+(1-\gamma)\Omega_{B}f\big](\eta)
\end{equation*}
for functions $f\in C(\Sigma)$ depending on only finitely many coordinates.

Next, it is necessary to show that this interacting particle system is the limit in distribution of the $N$-player model as $N\to\infty$.  Furthermore, we need to adjust the state space by relabeling the players. Specifically, we let 
$$
\Sigma_N :=\{\eta =(\eta(l_{N}), \ldots, \eta(r_N)):\eta(x) \in \{0, 1\}\text{ for }x=l_N,\ldots, r_N \},
$$
where $l_N$ and $r_N$ are as in \eqref{relabel}.  It should be noted that players $l_N$ and $r_N$ are nearest neighbors.  We denote the Markov chain in $\Sigma_N$ by $\{X_k^{\cdot,N},\; k=0,1,2,\ldots\}$, where $A'$, $B$, or $C'$ appears in place of the dot.

First, let us analyze game $A'$.
The one-step transition matrix $\bm P_{A'}$ of the Markov chain in the state space $\Sigma_N$ has the form
\begin{align*}
P_{A'}(\eta, \xi)&:=\frac{1}{4N}\sum_{l_N\le x\le r_N}[\delta(\eta^{x,x-1,-1},\xi)+\delta(\eta^{x,x-1,1},\xi)\\
&\qquad\qquad\qquad\qquad{}+\delta(\eta^{x,x+1,-1},\xi)+\delta(\eta^{x,x+1,1},\xi)]\\
&\;=\frac{1}{2N}\sum_{l_N\le x\le r_N}\big[\delta(\eta^{x,-1},\xi)+\delta(\eta^{x,1},\xi)\big] ,
\end{align*}
where $\delta(\eta, \xi)$ is the Kronecker delta, which is $1$ if $\eta = \xi$ and is $0$ otherwise; the sum over $x$  ranges over $\{ l_{N}, \dots, r_{N} \}$, and $l_N-1:=r_N$ and $r_N+1:=l_N$.  Next,  we have the one-step transition matrix $\bm P_B$ of the form 
$$
P_{B}(\xi, \zeta):=\frac{1}{N} \sum_y [1-c(y,\xi)]\delta(\xi, \zeta)+\frac{1}{N} \sum_y c(y,\xi)\delta(\xi_{y}, \zeta),
$$
where  the sum over  $y$ also ranges over $\{ l_{N}, \dots, r_{N} \}$; $c(y,\xi)$ is as in \eqref{c-def}, except that $l_N-1:=r_N$ and $r_N+1:=l_N$.  

We speed up time in the $N$-player model so that $N$ one-step transitions occur per unit of time. Then the discrete generator corresponding to game $A'$ is 
\begin{align*}
(\Omega^N_{A'}f)(\eta)&=N\text{E}\big[f(X_1^{A',N})-f(\eta)\mid X_0^{A',N}=\eta\big]\\
&=N\sum_{\xi\in\Sigma_N}P_{A'}(\eta, \xi)\big[f(\xi)-f(\eta)\big]\\
&=N\sum_{\xi \in \Sigma_N}\frac{1}{2N}\sum_{l_N\leq x \leq r_N}\big[\delta(\eta^{x,-1}, \xi)+\delta(\eta^{x,1}, \xi) \big]\big[f(\xi)-f(\eta)\big]\\
&=\frac12 \sum_{l_N\leq x \leq r_N}\big[f(\eta^{x,-1})-f(\eta)+f(\eta^{x,1})-f(\eta)\big]\\
&=\sum_{l_N\leq x \leq r_N}\bigg[\frac12f(\eta^{x,-1})+\frac12f(\eta^{x,1})-f(\eta)\bigg].
\end{align*}
The discrete generator corresponding to game $B$ is 
\begin{align*}
(\Omega^N_{B}f)(\eta)&=N\text{E}\big[f(X_1^{B,N})-f(\eta)\mid X_0^{B,N}=\eta\big]\\
&=N\sum_{\xi\in\Sigma_N}P_B(\eta, \xi)\big[f(\xi)-f(\eta)\big]\\
&=\sum_{l_N\leq x \leq r_N: \eta(x)=0}p_{m_x(\eta)}\big[f(\eta_x)-f(\eta)\big]\\
&\qquad\quad+\sum_{l_N\leq x \leq r_N: \eta(x)=1}q_{m_x(\eta)}\big[f(\eta_x)-f(\eta)\big].
\end{align*}
Hence the discrete generator corresponding to game $C'$ is 
\begin{align*}
(\Omega^N_{C'}f)(\eta)&=N\text{E}\big[f(X_1^{C',N})-f(\eta)\mid X_0^{C',N}=\eta\big]\\
&=\big[\gamma\Omega_{A'}^N f+(1-\gamma)\Omega_B^N f\big](\eta)\\
&=\gamma\sum_{l_N\leq x \leq r_N}\bigg[\frac12f(\eta^{x,-1})+\frac12f(\eta^{x,1})-f(\eta)\bigg]\\
&\qquad+(1-\gamma)\bigg[\sum_{l_N\leq x \leq r_N: \eta(x)=0}p_{m_x(\eta)}\big[f(\eta_x)-f(\eta)\big]\\
&\qquad\qquad\qquad\qquad+\sum_{l_N\leq x \leq r_N: \eta(x)=1}q_{m_x(\eta)}\big[f(\eta_x)-f(\eta)\big]\bigg].
\end{align*}

We define $\psi_N: B(\Sigma)\mapsto B(\Sigma_N)$ by 
\begin{equation*}
(\psi_N f)(\eta(l_N),\ldots,\eta(r_N)):=f(\ldots,1,1,\eta(l_N),\ldots,\eta(r_N),1,1,\ldots).
\end{equation*}

\begin{lemma} \label{Omega-psi}
If $f\in C(\Sigma)$ depends on $\eta$ only through the $2K+1$ components $\eta(x)$ for $-K \leq x \leq K$, then
\begin{equation*}
(\Omega^N_{A'}\psi_Nf)(\eta)=\psi_N(\Omega_{A'}f)(\eta),
\end{equation*}
\begin{equation*}
(\Omega^N_{B}\psi_Nf)(\eta)=\psi_N(\Omega_{B}f)(\eta),
\end{equation*}
and
\begin{equation*}
(\Omega^N_{C'}\psi_Nf)(\eta)=\psi_N(\Omega_{C'}f)(\eta) 
\end{equation*}
for all $\eta \in \Sigma_N$ and $N\geq 2K+4$.
\end{lemma}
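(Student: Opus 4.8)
The strategy is to reduce each identity to a term-by-term comparison of two finite sums, using the fact that $f$ depends only on the coordinates indexed by $\{-K,\dots,K\}$ together with the $1$-padding built into $\psi_N$. Write $\hat\eta\in\Sigma$ for the padded extension of $\eta\in\Sigma_N$, i.e.\ the configuration that agrees with $\eta$ on $\{l_N,\dots,r_N\}$ and equals $1$ at every other site, so that $\psi_N f(\eta)=f(\hat\eta)$ and hence $\psi_N(\Omega_{A'}f)(\eta)=(\Omega_{A'}f)(\hat\eta)$, and likewise for $\Omega_B$ and $\Omega_{C'}$. The right-hand sides are thus the infinite-volume generators evaluated at $\hat\eta$, with sums ranging over all $x\in\Z$.

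First I would record the elementary commutation facts. If a site $x$ and the neighbors touched by a local move all lie in $\{l_N,\dots,r_N\}$ (so that the move never invokes the circular identifications $l_N-1=r_N$, $r_N+1=l_N$), then applying the move in $\Sigma_N$ and then extending agrees with extending and then applying the same move in $\Sigma$: $\widehat{\eta_x}=(\hat\eta)_x$, $\widehat{\eta^{x,\pm1}}=(\hat\eta)^{x,\pm1}$, and $\widehat{{}_x\eta_{x+1}}={}_x(\hat\eta)_{x+1}$. Moreover, for such an $x$ the neighbors of $x$ are read off genuine window values, so $m_x(\hat\eta)=2\eta(x-1)+\eta(x+1)$ and consequently $c(x,\hat\eta)$ and $c'(x,\hat\eta)$ coincide with the corresponding discrete rates.

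Next I would collapse both sides to a common finite index set. Since $f$ is a function of $\eta(-K),\dots,\eta(K)$ only, every difference $f(\cdot)-f(\cdot)$ appearing in the sums vanishes unless the transition actually alters one of the coordinates in $\{-K,\dots,K\}$. For $\Omega_B^N$ this forces the flip site to satisfy $x\in\{-K,\dots,K\}$; for $\Omega_{A'}^N$, whose moves $\eta^{x,-1}$ and $\eta^{x,1}$ touch the pairs $\{x-1,x\}$ and $\{x,x+1\}$, it forces $x\in\{-K-1,\dots,K+1\}$. The same reasoning applied to the infinite-volume sums at $\hat\eta$ collapses them to exactly the same index sets. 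It therefore suffices to check, for each surviving $x$, that the discrete summand equals the continuous one, which is immediate from the commutation facts above, provided that for every surviving $x$ the sites actually touched all lie in $\{l_N,\dots,r_N\}$.

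The only real point, and the place where the hypothesis $N\ge2K+4$ enters, is this last containment: I would verify, separately for $N$ odd and $N$ even from the definitions \eqref{relabel} of $l_N,r_N$, that $N\ge2K+4$ guarantees the $(2K+3)$-site halo $\{-K-1,\dots,K+1\}\subset\{l_N,\dots,r_N\}$ (for even $N$ this is the tight constraint $r_N=N/2-1\ge K+1$), so that the circular identifications never reach any site consulted by a surviving term. Granting this, any term with nonzero $f$-difference touches $\{-K,\dots,K\}$, hence has its whole $($one- or two-site$)$ support inside the halo and so inside the window, where the commutation facts apply; the two finite sums then coincide term by term, proving the identities for $A'$ and $B$. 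The identity for $C'$ follows immediately by linearity, since $\Omega_{C'}^N=\gamma\Omega_{A'}^N+(1-\gamma)\Omega_B^N$, $\Omega_{C'}=\gamma\Omega_{A'}+(1-\gamma)\Omega_B$, and $\psi_N$ is linear. The main obstacle is thus not analytic but purely combinatorial bookkeeping at the seam of the circle: one must ensure that the padding value $1$ is never actually consulted by a surviving term, which is exactly what the halo-containment count secures.
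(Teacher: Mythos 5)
Your proof is correct and is precisely the ``straightforward'' verification the paper omits (its entire proof is the sentence ``The proof is straightforward.''): reduce both sides by locality of $f$ to the same finite set of moves, observe that every surviving move has its one- or two-site support in the halo $\{-K-1,\dots,K+1\}$, and check that $N\ge 2K+4$ (tight for even $N$, where $r_N=N/2-1$) places the halo inside $\{l_N,\dots,r_N\}$, so neither the circular identifications $l_N-1:=r_N$, $r_N+1:=l_N$ nor the padding $1$'s are ever consulted by a nonvanishing term. You also handle correctly the one delicate point where a careless version would fail: applying the commutation facts per surviving \emph{sub-term} rather than per index $x$, since e.g.\ at $x=K+1$ with $N=2K+4$ the $\eta^{x,1}$ sub-term does wrap around the seam, but its $f$-difference vanishes on both sides.
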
 

\begin{proof}
The proof is straightforward.
\end{proof}

Lemma~\ref{Omega-psi} implies that the process $\{X_{\lfloor N t \rfloor}^{C',N}\}$ converges in distribution to the interacting particle system $\{X_t\}$ by Theorem 1.6.5 and 4.2.6 of Ethier and Kurtz (1986).  More importantly, it implies that, if the interacting particle system has a unique stationary distribution, then the unique stationary distribution of the $N$-player Markov chain converges to it in the topology of weak convergence, essentially by Proposition I.2.14 of Liggett (1985).  Let us assume that the interacting particle system with generator $\Omega_{C'}$ has a unique stationary distribution $\pi$, and let us denote the unique stationary distribution of the $N$-player Markov chain for the $(\gamma,1-\gamma)$ random mixture of games $A'$ and $B$ by $\pi^N$.  Let us denote their $-1,1$ two-dimensional marginals by $\pi_{-1,1}^N$ and $\pi_{-1,1}$.  Then we have
\begin{align}\label{mean-limit}
\mu_{(\gamma,1-\gamma)'}^N&=(1-\gamma)[\pi^N_{-1,1}(0,0)(2p_0-1)+\pi^N_{-1,1}(0,1)(2p_1-1)\nonumber\\
&\qquad{}+\pi^N_{-1,1}(1,0)(2p_2-1)+\pi^N_{-1,1}(1,1)(2p_3-1)]\nonumber\\
&\to(1-\gamma)[\pi_{-1,1}(0,0)(2p_0-1)+\pi_{-1,1}(0,1)(2p_1-1)\nonumber\\
&\qquad{}+\pi_{-1,1}(1,0)(2p_2-1)+\pi_{-1,1}(1,1)(2p_3-1)]\nonumber\\
&=:\mu_{(\gamma,1-\gamma)'},
\end{align}
where the first equality is based on the following idea:  Suppose player 0 is chosen to play. If he plays game $A'$ (probability $\gamma$) he wins or loses 1 with probability 1/2 each, so his mean profit is 0; if he plays game $B$ (probability $1-\gamma$), he tosses a $p_m$ coins with $m$ determined by the status of his nearest neighbors, hence his expected profit is $2p_m-1$.  We conclude that $\mu_{(\gamma,1-\gamma)'}^N$, the mean profit per turn at equilibrium to the $N$ players playing the $(\gamma,1-\gamma)$ random mixture of games $A'$ and $B$, converges as $N\to\infty$ to a limit that can be expressed in terms of an interacting particle system. We have proved the following.

\begin{theorem}\label{conv-N-gamma 1-gamma}
Fix $\gamma\in(0,1)$.  Assume that the interacting particle system on ${\bf Z}$ with generator $\Omega_{C'}:=\gamma\Omega_{A'}+(1-\gamma)\Omega_B$ is ergodic with unique stationary distribution $\pi$.  Then $\lim_{N\to\infty}\mu_{(\gamma,1-\gamma)'}^N=\mu_{(\gamma,1-\gamma)'}$, where $\mu_{(\gamma,1-\gamma)'}$ is as in \eqref{mean-limit}.
\end{theorem}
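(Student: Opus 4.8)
The statement is essentially assembled from the three ingredients already in place, and the plan is to make the chain of implications explicit. First I would justify the representation of $\mu^N_{(\gamma,1-\gamma)'}$ recorded as the first equality in \eqref{mean-limit}. In a single turn exactly one of the $N$ players is chosen uniformly at random, and a $\gamma$-coin decides whether game $A'$ or game $B$ is played; under $A'$ the chosen player has mean profit $0$, while under $B$ the chosen player has mean profit $2p_{m_0(\eta)}-1$ with $m_0(\eta)=2\eta(-1)+\eta(1)$. The reduction to player $0$ uses that the $N$-player transition rule is invariant under the cyclic shift of the labels $\{l_N,\dots,r_N\}$, so by uniqueness the stationary distribution $\pi^N$ is shift-invariant and every player contributes equally; hence $\mu^N_{(\gamma,1-\gamma)'}=(1-\gamma)\,\E_{\pi^N}[2p_{m_0(\eta)}-1]$. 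Splitting this expectation over the four values of $(\eta(-1),\eta(1))\in\{0,1\}^2$ produces exactly the four-term expression in $\pi^N_{-1,1}$.

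Second I would feed Lemma~\ref{Omega-psi} into the semigroup-convergence machinery. That lemma shows $\Omega^N_{C'}\psi_N f=\psi_N\Omega_{C'}f$ on cylinder functions once $N$ is large; since such functions form a core for $\Omega_{C'}$, Theorems 1.6.5 and 4.2.6 of Ethier and Kurtz (1986) give convergence in distribution of the rescaled chains $\{X^{C',N}_{\lfloor Nt\rfloor}\}$ to the interacting particle system $\{X_t\}$ with generator $\Omega_{C'}$.

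The step I expect to be the crux is the passage from this pathwise convergence to convergence of the stationary distributions, which is not automatic. Here I would exploit that $\Sigma=\{0,1\}^{\Z}$ is compact, so the family $\{\pi^N\}$, each viewed as a measure on $\Sigma$ by filling the coordinates outside $[l_N,r_N]$ with $1$'s (the convention in $\psi_N$), is tight. Along any weakly convergent subsequence $\pi^{N_k}\Rightarrow\nu$, stationarity of $\pi^{N_k}$ gives $\int \psi_{N_k}(\Omega_{C'}f)\,d\pi^{N_k}=\int\Omega_{C'}f\,d\pi^{N_k}=0$ for every cylinder $f$ via Lemma~\ref{Omega-psi}, and since $\Omega_{C'}f$ is continuous, letting $k\to\infty$ yields $\int\Omega_{C'}f\,d\nu=0$, so $\nu$ is stationary for $\Omega_{C'}$. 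The ergodicity hypothesis, i.e.\ uniqueness of $\pi$, then identifies every subsequential limit with $\pi$, whence the full sequence satisfies $\pi^N\Rightarrow\pi$. This is precisely the conclusion of Proposition I.2.14 of Liggett (1985), which I would cite to package the argument.

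Finally I would pass to the two-dimensional marginal. For $N$ large the sites $-1$ and $1$ lie strictly inside the window $[l_N,r_N]$, so the wrap-around identifying $l_N$ and $r_N$ is irrelevant and $\pi^N_{-1,1}$ is a genuine marginal of the embedded measure. Each set $\{\eta:\eta(-1)=i,\ \eta(1)=j\}$ is clopen in the product topology, hence a continuity set, so $\pi^N\Rightarrow\pi$ yields $\pi^N_{-1,1}(i,j)\to\pi_{-1,1}(i,j)$ for every $(i,j)\in\{0,1\}^2$. Substituting these four limits into the representation from the first step gives $\mu^N_{(\gamma,1-\gamma)'}\to\mu_{(\gamma,1-\gamma)'}$, completing the proof.
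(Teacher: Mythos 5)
Your proposal is correct and follows essentially the same route as the paper: the representation of $\mu^N_{(\gamma,1-\gamma)'}$ via conditioning on the chosen player, Lemma~\ref{Omega-psi} combined with Theorems 1.6.5 and 4.2.6 of Ethier and Kurtz (1986), and convergence of the stationary distributions $\pi^N\Rightarrow\pi$ via Proposition I.2.14 of Liggett (1985), followed by passing to the $(-1,1)$ marginals. Your tightness/subsequential-limit argument and the shift-invariance justification merely spell out details the paper compresses into ``essentially by Proposition I.2.14,'' so the two proofs are the same in substance.
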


The discrete generator for the nonrandom periodic pattern $(A')^r B^s$ has the form, for $f\in B(\Sigma_N)$,
$$
(\Omega^N_{[r, s]'}f)(\eta^0)=\frac{N}{r+s}\sum_{\eta^{r+s}}[f(\eta^{r+s})-f(\eta^0)](\bm{P}^r_{A'}\bm{P}^s_{B})(\eta^0, \eta^{r+s}).
$$
We begin by evaluating
\begin{align}\label{P_A^rP_B^s}
&\!\!\!\!\!(\bm{P}^{r}_{A'}\bm{P}^{s}_{B})(\eta^0, \eta^{r+s})\nonumber\\
&=\sum_{\eta^1,\eta^2,\ldots, \eta^{r+s-1} } P_{A'}(\eta^0, \eta^1)P_{A'}(\eta^1, \eta^2)\cdots P_{A'}(\eta^{r-1}, \eta^r)\nonumber\\
&\quad\cdot P_B(\eta^r, \eta^{r+1})P_B(\eta^{r+1}, \eta^{r+2})\cdots P_B(\eta^{r+s-1}, \eta^{r+s})\nonumber\\
&=\sum_{\eta^1,\eta^2,\ldots, \eta^{r+s-1} }\prod_{i=1}^r P_{A'}(\eta^{i-1},\eta^i)\prod_{i=r+1}^{r+s}P_B(\eta^{i-1},\eta^i)\nonumber\\
&=\sum_{\eta^1,\eta^2,\ldots,\eta^{r+s-1}} \prod_{i=1}^{r}\bigg[\frac{1}{2N}\sum_{x_{i}}[\delta((\eta^{i-1})^{x_{i},-1}, \eta^i)+\delta((\eta^{i-1})^{x_{i},1}, \eta^i)]\bigg]\nonumber\\
&\quad\cdot \prod_{i=r+1}^{r+s}\bigg[\frac{1}{N}\sum_{x_{i}}(1-c(x_{i},\eta^{i-1}))\delta(\eta^{i-1}, \eta^{i})+\frac{1}{N}\sum_{x_{i}}c(x_{i},\eta^{i-1})\delta((\eta^{i-1})_{x_{i}}, \eta^{i})\bigg]\nonumber\\
&=\frac{1}{2^rN^{r+s}}\sum_{A\subset \{1,\ldots, r\}}\sum_{B\subset \{r+1,\ldots, r+s\}}\sum_{\eta^1,\eta^2,\ldots, \eta^{r+s-1} }
\prod_{i\in A^c}\bigg[\sum_{x_{i}}\delta((\eta^{i-1})^{x_{i},-1}, \eta^i)\bigg]\nonumber\\  
&\quad\cdot \prod_{i\in A}\bigg[\sum_{x_{i}}\delta((\eta^{i-1})^{x_{i},1}, \eta^i)\bigg] \prod_{i\in B^c}\bigg[\sum_{x_{i}}(1-c(x_{i},\eta^{i-1}))\delta(\eta^{i-1}, \eta^{i})\bigg]\nonumber\\
&\quad\cdot \prod_{i\in B}\bigg[\sum_{x_{i}}c(x_{i},\eta^{i-1})\delta((\eta^{i-1})_{x_{i}}, \eta^{i})\bigg]\nonumber\\
&=\frac{1}{2^rN^{r+s}}\sum_{A\subset \{1,\ldots, r\}}\sum_{B\subset \{r+1,\ldots, r+s\}}\sum_{\eta^1,\eta^2,\ldots, \eta^{r+s-1}}\sum_{x_{i}: i\in A^c}\sum_{x_{i}: i\in A }\nonumber\\
&\quad\cdot \prod_{j\in A^c}\big[\delta((\eta^{j-1})^{x_{j},-1}, \eta^j)\big] \prod_{j\in A}\big[\delta((\eta^{j-1})^{x_{j},1}, \eta^j)\big]\sum_{x_{i}: i\in B^c}\sum_{x_{i}: i\in B}\nonumber\\
&\quad\cdot \prod_{j\in B^c}\big[(1-c(x_{j},\eta^{j-1}))\delta(\eta^{j-1}, \eta^{j})\big] \prod_{j\in B}\big[c(x_{j},\eta^{j-1})\delta((\eta^{j-1})_{x_{j}}, \eta^{j})\big]\nonumber\\
&=\frac{1}{2^rN^{r+s}}\sum_{A\subset \{1,\ldots, r\}}\sum_{B\subset \{r+1,\ldots, r+s\}}\sum_{x_{i}: i\in \{1,2,\ldots, r+s\}}\nonumber\\
&\quad\cdot\prod_{j\in B^c}\big[1-c(x_{j},((\cdots((\cdots(((\eta^{0})^{x_{1},a_1})^{x_{2},a_2})\cdots)^{x_{p},a_p})\cdots)^{x_{r},a_r})_{\{x_{l}: l \in B,l<j\}})\big]\nonumber\\
&\quad\cdot\prod_{j\in B}c(x_{j},((\cdots((\cdots(((\eta^{0})^{x_{1},a_1})^{x_{2},a_2})\cdots)^{x_{p},a_p})\cdots)^{x_{r},a_r})_{\{x_{l}: l \in B,l<j\}})\nonumber\\
&\qquad\times\delta(((\cdots((\cdots(((\eta^{0})^{x_{1},a_1})^{x_{2},a_2})\cdots)^{x_{p},a_p})\cdots)^{x_{r},a_r})_{\{x_{l}: l \in B\}}, \eta^{r+s}),
\end{align}
where $A^c:=\{1,2,\ldots, r\}-A$ and $B^c:=\{r+1,r+2,\ldots, r+s\}-B$; also $p \in \{1,2,\ldots, r\}$ and 
$$
a_p = \begin{cases} -1 &\text{if $p \in A^c$}, \\ \phantom{-}1 & \text{if $p \in A$}. \end{cases} 
$$
Here, for example, $\eta_{\{x_l:l\in B\}}$ denotes $\eta$ with the spins flipped at each site $x_l$ with $l\in B$.  These site labels need not be distinct, so if there are multiple flips at a single site, only the parity of the number of flips is relevant.

Next, assume that $f\in B(\Sigma)$ depends only on $\eta(-(K-2)),\ldots,\eta(K-2)$ for some integer $K\ge2$, and put $f_N:=\psi_N f\in B(\Sigma_N)$.  Then the discrete generator for the pattern $(A')^r B^s$, acting on $f_N$, reduces to   
\begin{align}\label{Omega_[r,s]}
&(\Omega^N_{[r, s]'}f_N)(\eta^0)\nonumber\\
&=\frac{N}{r+s}\sum_{\eta^{r+s}}[f_N(\eta^{r+s})-f_N(\eta^0)](\bm{P}^{r}_{A'}\bm{P}^{s}_{B})(\eta^0, \eta^{r+s})\nonumber\\
&=\frac{1}{r+s}\ \frac{1}{2^rN^{r+s-1}}\sum_{A\subset \{1,\ldots, r\}}\sum_{B\subset \{r+1,\ldots, r+s\}}\sum_{x_{i}: i\in \{1,2,\ldots, r+s\}}\nonumber\\
&\quad\cdot \prod_{j\in B^c}\big[1-c(x_{j},((\cdots((\cdots(((\eta^{0})^{x_{1},a_1})^{x_{2},a_2})\cdots)^{x_{p},a_p})\cdots)^{x_{r},a_r})_{\{x_{l}: l \in B,l<j\}})\big]\nonumber\\
&\quad\cdot \prod_{j\in B}c(x_{j},((\cdots((\cdots(((\eta^{0})^{x_{1},a_1})^{x_{2},a_2})\cdots)^{x_{p},a_p})\cdots)^{x_{r},a_r})_{\{x_{l}: l \in B,l<j\}})\nonumber\\
&\quad\cdot [f_N(((\cdots((\cdots(((\eta^{0})^{x_{1},a_1})^{x_{2},a_2})\cdots)^{x_{p},a_p})\cdots)^{x_{r},a_r})_{\{x_{l}: l \in B\}})-f_N( \eta^{0})]
\end{align}
We replace $\sum_{x_{i}: i\in \{1,2,\ldots, r+s\}}$ in \eqref{Omega_[r,s]} by 
\begin{align}\label{decomposition}
&\bigg(\sum_{i\in A^c}+\sum_{i\in A}\bigg)\sum_{|x_i|\leq K}\sum_{|x_m|>K: m\in \{1,2,\ldots,r+s\},m \neq i}\nonumber\\
&\quad{} + \sum_{i\in B}\sum_{|x_i|\leq K}\sum_{|x_m|>K: m\in \{1,2,\ldots,r+s\},m \neq i}
\end{align}
since each sum $\sum_{x_{i}}$ can be written as $\sum_{|x_i|\leq K}+\sum_{|x_i|> K}$ resulting in $2^{r+s}$ multiple sums in which each of those multiple sums with two or more sums of the form $\sum_{|x_i|\leq K}$ contributes at most $O(N^{-1})$  and those without the form $\sum_{|x_i|\leq K},$ where $i \in \{1,2,\ldots,r\}\cup B$ are $0$.  So it is enough to analyze the cases in which only one of the $|x_i|$'s is less than or equal to $K$.

We consider first the first term in \eqref{decomposition}.  It contributes 
\begin{align*}
&\frac{1}{r+s}\;\frac{1}{2^rN^{r+s-1}}\sum_{A\subset \{1,\ldots, r\}}\sum_{B\subset \{r+1,\ldots, r+s\}}\bigg[\sum_{i \in A^c}\sum_{|x_{i}|\leq K}[f_N((\eta^0)^{x_i,-1})- f_N(\eta^0)]\\
&\qquad\qquad\qquad\qquad\qquad\qquad\qquad\qquad\qquad+\sum_{i \in A}\sum_{|x_{i}|\leq K}[f_N((\eta^0)^{x_i,1})-f_N(\eta^0)]\bigg]\\
&\quad\cdot\sum_{|x_m|>K: m\in \{1,2,\ldots,r+s\},m \neq i}\prod_{j\in B^c}\big[1\\
&\qquad\qquad\quad{}-c(x_{j},((\cdots((\cdots(((\eta^{0})^{x_{1},a_1})^{x_{2},a_2})\cdots)^{x_{p},a_p})\cdots)^{x_{r},a_r})_{\{x_{l}: l \in B,l<j\}})\big]\\
&\quad\cdot\prod_{j\in B}c(x_{j},((\cdots((\cdots(((\eta^{0})^{x_{1},a_1})^{x_{2},a_2})\cdots)^{x_{p},a_p})\cdots)^{x_{r},a_r})_{\{x_{l}: l \in B,l<j\}})\\
&=\frac{1}{r+s}\,\frac{1}{2^rN^{r+s-1}}\!\!\!\sum_{A\subset\{1,\ldots,r\}}\sum_{B\subset\{r+1,\ldots,r+s\}}\bigg[\sum_{i\in A^c}\sum_{|x_{i}|\leq K}[f_N((\eta^0)^{x_i,-1})-f_N(\eta^0)]\\
&\qquad\qquad\qquad\qquad\qquad\qquad\qquad\qquad\qquad\;\;{}+\sum_{i \in A}\sum_{|x_{i}|\leq K}[f_N((\eta^0)^{x_i,1})-f_N(\eta^0)]\bigg]\\
&\quad\cdot\sum_{|x_m|>K: m\in \{1,2,\ldots,r+s\},m \neq i}\prod_{j\in B^c}\big[1\\
&\qquad\qquad\qquad\qquad\qquad{}-c(x_{j},(\cdots((\cdots(((\eta^{0})^{x_{1},a_1})^{x_{2},a_2})\cdots)^{x_{p},a_p})\cdots)^{x_{r},a_r})\big]\\
&\quad\cdot\prod_{j\in B}c(x_{j},(\cdots((\cdots(((\eta^{0})^{x_{1},a_1})^{x_{2},a_2})\cdots)^{x_{p},a_p})\cdots)^{x_{r},a_r})\\
&=\frac{1}{r+s}\,\frac{1}{2^rN^{r+s-1}}\big[N-(2K+1)\big]^{r+s-1}\\
&\qquad\qquad\cdot{}\sum_{A\subset \{1,\ldots, r\}}\bigg[|A^c|\sum_{|x|\leq K}[f_N((\eta^0)^{x,-1})- f_N(\eta^0)]\\
&\qquad\qquad\qquad\qquad\qquad{}+|A|\sum_{|x|\leq K}[f_N((\eta^0)^{x,1})-f_N(\eta^0)]\bigg]\\
&=\frac{1}{r+s}\ \frac{1}{2^rN^{r+s-1}}\big[N-(2K+1)\big]^{r+s-1}\sum_{A\subset \{1,\ldots, r\}}\bigg[|A^c|\sum_{|x|\leq K}f_N((\eta^0)^{x,-1})\\
&\qquad\qquad\qquad\qquad\qquad\qquad\qquad\qquad{}+|A|\sum_{|x|\leq K}f_N((\eta^0)^{x,1})-r\sum_{|x|\leq K}f_N(\eta^0)\bigg]\\
&=\frac{1}{r+s}\ \frac{1}{N^{r+s-1}}\big[N-(2K+1)\big]^{r+s-1} \bigg[\frac{r}{2}\sum_{|x|\leq K} f_N((\eta^0)^{x,-1})\\
&\quad\qquad\qquad\qquad\qquad\qquad\qquad\qquad\qquad+\frac{r}{2}\sum_{|x|\leq K} f_N((\eta^0)^{x,1})-r\sum_{|x|\leq K} f_N(\eta^0)\bigg]\\
&=\frac{r}{r+s}\sum_{|x|\leq K}\bigg[\frac{1}{2} f_N((\eta^0)^{x,-1})+\frac{1}{2} f_N((\eta^0)^{x,1})-f_N(\eta^0)\bigg]+O(N^{-1})\\
&=\frac{r}{r+s}\psi_N (\Omega_{A'}f)(\eta^0)+O(N^{-1}),
\end{align*}
where, in the second equality,
\begin{align*}
&\sum_{|x_m|>K: m\in \{1,2,\ldots,r+s\},m \neq i}\sum_{B\subset \{r+1,\ldots, r+s\}}\prod_{j\in B^c}\big[1\\
&\qquad\qquad\qquad\qquad\qquad{}-c(x_{j},(\cdots((\cdots(((\eta^{0})^{x_{1},a_1})^{x_{2},a_2})\cdots)^{x_{p},a_p})\cdots)^{x_{r},a_r})\big]\\
&\qquad\qquad\qquad\qquad\cdot \prod_{j\in B}c(x_{j},(\cdots((\cdots(((\eta^{0})^{x_{1},a_1})^{x_{2},a_2})\cdots)^{x_{p},a_p})\cdots)^{x_{r},a_r})\\
&=\sum_{|x_m|>K: m\in \{1,2,\ldots,r+s\},m \neq i} \prod_{j=r+1}^{r+s}\big[1\nonumber\\
&\qquad\qquad\qquad\qquad{}-c(x_{j},(\cdots((\cdots(((\eta^{0})^{x_{1},a_1})^{x_{2},a_2})\cdots)^{x_{p},a_p})\cdots)^{x_{r},a_r})\\
&\qquad\qquad\qquad\qquad{}+c(x_{j},(\cdots((\cdots(((\eta^{0})^{x_{1},a_1})^{x_{2},a_2})\cdots)^{x_{p},a_p})\cdots)^{x_{r},a_r})\big]\\
&=[N-(2K+1)]^{r+s-1},
\end{align*}
and in the fourth equality, 
\begin{align*}
&\frac{1}{2^r}\sum_{A\subset \{1,\ldots, r\}}|A^c|=\frac{1}{2^r}\sum_{A\subset \{1,\ldots, r\}}|A|=\frac{1}{2^r}\sum_{i=0}^{r}i\ {r\choose i}=\frac{r}{2}.
\end{align*}

Next, we consider the second term in \eqref{decomposition}.  It contributes 
\begin{align*}
&\!\!\!\!\!\frac{1}{r+s}\ \frac{1}{2^rN^{r+s-1}}\sum_{A\subset \{1,\ldots, r\}}\sum_{B\subset \{r+1,\ldots, r+s\}}\sum_{i \in B}\sum_{|x_{i}|\leq K}[f_N((\eta^0)_{x_i})- f_N(\eta^0)]\\
&\quad\cdot\sum_{|x_m|>K: m\in \{1,2,\ldots,r+s\},m \neq i}\prod_{j\in B^c}\big[1\\
&\qquad\qquad{}-c(x_{j},((\cdots((\cdots(((\eta^{0})^{x_{1},a_1})^{x_{2},a_2})\cdots)^{x_{p},a_p})\cdots)^{x_{r},a_r})_{\{x_{l}: l \in B,l<j\}})\big]\\
&\quad\cdot \prod_{j\in B}c(x_{j},((\cdots((\cdots(((\eta^{0})^{x_{1},a_1})^{x_{2},a_2})\cdots)^{x_{p},a_p})\cdots)^{x_{r},a_r})_{\{x_{l}: l \in B,l<j\}})\\
&=\frac{1}{r+s}\ \frac{1}{2^rN^{r+s-1}}\sum_{A\subset\{1,\ldots, r\}}\sum_{i=r+1}^{r+s}\ \sum_{|x_{i}|\leq K}c(x_i,\eta^0)[f_N((\eta^0)_{x_i})- f_N(\eta^0)]\\
&\quad\cdot\sum_{|x_m|>K: m\in \{1,2,\ldots,r+s\},m \neq i}\sum_{B\subset \{r+1,\ldots, r+s\}: i\in B}\prod_{j\in B^c}\big[1-c(x_{j},\eta^{0})\big]\\
&\qquad\qquad\qquad\qquad\qquad\qquad\qquad\qquad\qquad\qquad{}\cdot\prod_{j\in B-\{i\}}c(x_{j},\eta^{0})+O(N^{-1})\\
&=\frac{1}{r+s}\ \frac{1}{2^rN^{r+s-1}}\big[N-(2K+1)\big]^{r+s-1}\sum_{A\subset \{1,\ldots, r\}} \sum_{i=r+1}^{r+s}\\
&\qquad\qquad\qquad\qquad\qquad\qquad{}\cdot\sum_{|x_{i}|\leq K} c(x_i,\eta^0)[f_N((\eta^0)_{x_i})- f_N(\eta^0)]+O(N^{-1})\\
&=\frac{1}{r+s}\ \frac{1}{2^rN^{r+s-1}}\big[N-(2K+1)\big]^{r+s-1}2^r \sum_{i=r+1}^{r+s}\\
&\qquad\qquad\qquad\qquad\qquad\qquad{}\cdot\sum_{|x_{i}|\leq K} c(x_i,\eta^0)[f_N((\eta^0)_{x_i})- f_N(\eta^0)]+O(N^{-1})\\
&=\frac{1}{r+s}\ \frac{1}{N^{r+s-1}}\big[N-(2K+1)\big]^{r+s-1} s \sum_{|x|\leq K} c(x,\eta^0)[f_N((\eta^0)_{x})- f_N(\eta^0)]\\
&\qquad\qquad\qquad\qquad\qquad\qquad\qquad\qquad\qquad\qquad\qquad\qquad\qquad\qquad{}+O(N^{-1})\\
&=\frac{s}{r+s}\  \sum_{|x|\leq K} c(x,\eta^0)[f_N((\eta^0)_{x})- f_N(\eta^0)]+O(N^{-1})\\
&=\frac{s}{r+s}\psi_N(\Omega_{B}f)(\eta^0)+O(N^{-1}),
\end{align*}
where the first and second equalities require clarification.
 
In the first equality we used 
$$
\sum_{B\subset \{r+1,\ldots, r+s\}}\sum_{i \in B}=\sum_{i=r+1}^{r+s}\sum_{B \subset \{r+1,\ldots, r+s\}:i\in B}
$$
and 
$$
c(x_{j},((\cdots((\cdots(((\eta^{0})^{x_{1},a_1})^{x_{2},a_2})\cdots)^{x_{p},a_p})\cdots)^{x_{r},a_r})_{\{x_{l}: l \in B,l<j\}})=c(x_j,\eta^{0})
$$
with possible exceptions if 
$$
\{x_j-1,x_j,x_j+1\}\cap\bigg[\bigcup_{p\in A^c}\{x_p,x_p-1\}\cup\bigcup_{p\in A}\{x_p,x_p+1\}\cup\bigcup_{p\in B}\{x_p\}\bigg]\ne\varnothing.
$$
That excludes at most $4r+3s$ of the $N$ possible values of $x_j$, hence involves an error of at most $O(N^{-1})$.
In the second equality,
\begin{align*}
&\sum_{|x_m|>K: m\in \{1,2,\ldots,r+s\},m \neq i}\sum_{B\subset \{r+1,\ldots, r+s\}:i\in B}\prod_{j\in B^c}\big[1-c(x_{j},\eta^{0})\big] \prod_{j\in B-\{i\}}c(x_{j},\eta^{0})\nonumber\\
&=\sum_{|x_m|>K: m\in \{1,2,\ldots,r+s\},m \neq i}\prod_{j\in\{r+1,\ldots,r+s\}-\{i\}}\big[1-c(x_{j},\eta^{0})+c(x_{j},\eta^{0})\big]\\
&=[N-(2K+1)]^{r+s-1}.
\end{align*} 

Therefore, we conclude that
\begin{equation} \label{Omega^N}
(\Omega^N_{[r, s]'}\psi_N f)(\eta^0)=\psi_{N}\bigg(\frac{r}{r+s}\Omega_{A'}f+\frac{s}{r+s}\Omega_{B}f\bigg)(\eta^0)+O(N^{-1}),
\end{equation} as desired.  

Since \eqref{Omega^N} holds, uniformly over $\Sigma_N,$ the unique stationary distribution $\pi^N$ of $\bm P_{A'}^r \bm P_{B}^s$ converges weakly  to the unique stationary distribution $\pi^{r/(r+s)}$ of the interacting particle system with generator $\Omega_{C'},$   provided that ergodicity holds for the limiting interacting particle system. Here,  $\Omega_{C'}= \gamma \Omega_{A'}+(1-\gamma) \Omega_B$ with $\gamma:=r/(r+s)$,  specifically 
\begin{align}\label{Omega_C'2}
(\Omega_{C'} f)(\eta)&=\gamma\sum_x c'(x,\eta)[f(\eta_x)-f(\eta)]\nonumber\\
&\quad+\gamma\sum_x [f(_x \eta_{x+1})-f(\eta)]+(1-\gamma)\sum_x c(x,\eta)[f(\eta_x)-f(\eta)].
\end{align}
Our aim is to prove the following theorem.

\begin{theorem}\label{periodic-limit}
Fix integers $r,s\ge1$ and put $\gamma:=r/(r+s)$.  Assume that the interacting particle system on ${\bf Z}$ with generator $\Omega_{C'}$ as in \eqref{Omega_C'2} is ergodic with unique stationary distribution $\pi^\gamma$.  Then $\lim_{N\to\infty}\mu_{[r,s]'}^N=\mu_{(\gamma,1-\gamma)'}$, where $\mu_{(\gamma,1-\gamma)'}$ is as in \eqref{mean-limit}.
\end{theorem}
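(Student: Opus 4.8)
The plan is to write the finite-$N$ mean profit $\mu_{[r,s]'}^N$ in terms of the central two-site marginals of the $r+s$ distributions visited during one period, and then to prove that every one of these marginals converges to the distance-two marginal of the \emph{single} limit $\pi^\gamma$. The reason the periodic and random-mixture limits agree is already visible in \eqref{Omega^N}: the discrete periodic generator $\Omega^N_{[r,s]'}$ has the same limit $\Omega_{C'}$ (with $\gamma=r/(r+s)$) as the random-mixture generator, so the two schemes share the limiting stationary distribution $\pi^\gamma=\pi$, and hence the same profit.

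First I would record the finite-$N$ profit formula. Let $g_B(\eta):=N^{-1}\sum_x(2p_{m_x(\eta)}-1)$ be the expected one-turn profit to the $N$ players when game $B$ is played from $\eta$; game $A'$ contributes $0$ since it only redistributes capital. Writing $\pi^N$ for the (unique, hence rotation-invariant) stationary distribution of $\bm P_{A'}^r\bm P_B^s$, the $j$th game $B$ of a period is played from $\nu_j^N:=\pi^N\bm P_{A'}^r\bm P_B^{j-1}$, so that
$$
\mu_{[r,s]'}^N=\frac{1}{r+s}\sum_{j=1}^s\E_{\nu_j^N}[g_B]=\frac{1}{r+s}\sum_{j=1}^s\sum_{a,b\in\{0,1\}}(2p_{2a+b}-1)\,\nu^N_{j;-1,1}(a,b),
$$
where the second equality uses rotation invariance of $\nu_j^N$ to replace the average over players by the distance-two marginal $\nu^N_{j;-1,1}$ at the central sites $-1,1$ (recall $m_0(\eta)=2\eta(-1)+\eta(1)$).

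The key step is to show each $\nu^N_{j;-1,1}$ equals $\pi^N_{-1,1}$ up to $O(N^{-1})$. For a local function $\phi$ with window $W$, one application of either kernel perturbs it only through the chosen player, giving
$$
\|(\bm P_B-I)\phi\|_\infty\le\frac1N\sum_{x\in W}\mathrm{osc}_x(\phi),\qquad\|(\bm P_{A'}-I)\phi\|_\infty=O(N^{-1}).
$$
I would then telescope $\bm P_{A'}^r\bm P_B^{j-1}-I$ into at most $r+s-1$ such factors; since each factor acts on a function whose window has grown by only a bounded amount and whose sup-norm is nonincreasing, this yields $\|(\bm P_{A'}^r\bm P_B^{j-1}-I)\,\psi_N\phi\|_\infty=O(N^{-1})$ uniformly. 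Applied to $\phi=\bm{1}_{\{\eta(-1)=a,\eta(1)=b\}}$ and integrated against $\pi^N$, this gives $\nu^N_{j;-1,1}(a,b)=\pi^N_{-1,1}(a,b)+O(N^{-1})$.

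Finally, under the ergodicity hypothesis the convergence $\pi^N\to\pi^\gamma$ deduced from \eqref{Omega^N} gives $\pi^N_{-1,1}(a,b)\to\pi^\gamma_{-1,1}(a,b)$, so every summand above tends to the same value and
$$
\mu_{[r,s]'}^N\longrightarrow\frac{s}{r+s}\sum_{a,b}(2p_{2a+b}-1)\pi^\gamma_{-1,1}(a,b)=(1-\gamma)\sum_{a,b}(2p_{2a+b}-1)\pi_{-1,1}(a,b)=\mu_{(\gamma,1-\gamma)'},
$$
using $1-\gamma=s/(r+s)$ and $\pi^\gamma=\pi$, which is exactly \eqref{mean-limit}. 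I expect the main obstacle to be the homogenization estimate of the previous paragraph: one must verify that sweeping a configuration through $r$ exclusion/spin steps and $j-1$ spin steps displaces the central two-site marginal by only $O(N^{-1})$, so that to leading order every game $B$ within a period sees the common equilibrium $\pi^N$, and the within-period index $j$ disappears in the limit.
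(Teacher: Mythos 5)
Your proposal is correct, and at the top level it follows the same skeleton as the paper's proof: the identical decomposition of $\mu_{[r,s]'}^N$ over the $s$ plays of game $B$ within a period (your $\nu_j^N=\pi^N\bm P_{A'}^r\bm P_B^{j-1}$ is exactly \eqref{mu^N} with $v=j-1$), the same homogenization claim that each $\nu_j^N$ agrees with $\pi^N$ on central cylinder functions up to $O(N^{-1})$, and the same final passage $\pi^N\to\pi^\gamma$ via \eqref{Omega^N}. Where you genuinely differ is in how the homogenization estimate is proved: the paper reuses the explicit kernel expansion \eqref{P_A^rP_B^s} and argues that the $r+v$ touched sites $x_1,\ldots,x_{r+v}$ perturb $m_z$ for only $O(1)$ of the $N$ values of $z$, whereas you telescope $\bm Q_1\cdots\bm Q_m-I=\sum_k\bm Q_1\cdots\bm Q_{k-1}(\bm Q_k-I)$ and use $\|(\bm P_B-I)\phi\|_\infty,\|(\bm P_{A'}-I)\phi\|_\infty=O(N^{-1})$ for cylinder $\phi$, plus the sup-norm contraction property. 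Your route is shorter and makes the uniformity transparent; the paper's heavier combinatorics costs little extra since \eqref{P_A^rP_B^s} was needed anyway for \eqref{Omega^N}. Two small points to nail down in your version: the locality preservation you invoke (``window grows by a bounded amount'') holds because the diagonal terms recombine, e.g.\ $(\bm P_B\phi)(\eta)=\phi(\eta)+N^{-1}\sum_{y\in W}c(y,\eta)[\phi(\eta_y)-\phi(\eta)]$, so $\bm P_B\phi$ and $\bm P_{A'}\phi$ are again cylinder functions with window enlarged by one site on each side (alternatively, telescope so that $\bm Q_k-I$ hits $\phi$ itself and no growth is needed); and the rotation invariance of $\pi^N$, hence of each $\nu_j^N$, which you correctly derive from uniqueness and rotation equivariance of the kernels, is used silently by the paper in the step converting $N^{-1}\sum_z$ into the $(-1,1)$ marginal, so making it explicit is an improvement rather than a deviation.
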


\begin{proof}
The mean profit per turn to the ensemble of $N$ players playing the nonrandom periodic pattern $(A')^rB^s$ is
\begin{equation} \label{mu^N}
\mu_{[r,s]'}^N =\frac{1}{r+s}\sum_{v=0}^{s-1}\sum_{\eta \in \Sigma_{N}}(\bm\pi^N\bm P^r_{A'}\bm P^v_{B})(\eta)\frac{1}{N}\sum_z[2p_{m_z(\eta)}-1].
\end{equation}
The sum over $\eta$ in \eqref{mu^N} can be expressed, using \eqref{P_A^rP_B^s}, as
\begin{align*}
&\!\!\!\!\!\sum_{\eta^0,\eta}\pi^N(\eta^0)(\bm P^r_{A'}\bm P^v_{B})(\eta^0, \eta)\frac{1}{N}\sum_{z}[2p_{m_z(\eta)}-1]\\
&=\frac{1}{2^rN^{r+v}}\ \sum_{\eta^0}\pi^N(\eta^0)\sum_{A\subset \{1,\ldots, r\}}\sum_{B\subset \{r+1,\ldots, r+v\}}\sum_{x_{i}: i\in \{1,2,\ldots, r+v\}}\\
&\quad\cdot \prod_{j\in B^c}\big[1-c(x_{j},((\cdots((\cdots(((\eta^{0})^{x_{1},a_1})^{x_{2},a_2})\cdots)^{x_{p},a_p})\cdots)^{x_{r},a_r})_{\{x_{l}: l \in B,l<j\}})\big]\\
&\quad\cdot \prod_{j\in B}c(x_{j},((\cdots((\cdots(((\eta^{0})^{x_{1},a_1})^{x_{2},a_2})\cdots)^{x_{p},a_p})\cdots)^{x_{r},a_r})_{\{x_{l}: l \in B,l<j\}})\\
&\quad\cdot\frac{1}{N}\sum_z[2p_{m_z(((\cdots((\cdots(((\eta^{0})^{x_{1},a_1})^{x_{2},a_2})\cdots)^{x_{p},a_p})\cdots)^{x_{r},a_r})_{\{x_{l}: l \in B\}})}-1]\\
&=\frac{1}{2^rN^{r+v}}\ \sum_{\eta^0} \pi^N(\eta^0)\sum_{A\subset \{1,\ldots, r\}}\sum_{B\subset \{r+1,\ldots, r+v\}}\sum_{x_{i}: i\in \{1,2,\ldots, r+v\}}\\
&\quad\cdot \prod_{j\in B^c}\big[1-c(x_{j},\eta^0)\big] \prod_{j\in B}c(x_{j},\eta^0)\frac{1}{N}\sum_z[2p_{m_z(\eta^0)}-1]+O(N^{-1})\\
&=\frac{1}{2^rN^{r+v}}\sum_{\eta^0} \pi^N(\eta^0)\ (2^r) (1)[N-(2k+1)]^{r+v}\frac{1}{N}\sum_z[2p_{m_z(\eta^0)}-1]+O(N^{-1})\\
&=\frac{1}{N}\sum_{z=1}^{N}\sum_{\eta^0} \pi^N(\eta^0)[p_{m_z(\eta^0)}-q_{m_z(\eta^0)}]+O(N^{-1})\\
&=\sum_{k=0}^{1} \sum_{l=0}^{1}(\pi^N)_{-1,1}(k, l)[2p_{2k+l}-1]+O(N^{-1})\\
&=\sum_{k=0}^{1} \sum_{l=0}^{1}(\pi^{r/(r+s)})_{-1,1}(k, l)[2p_{2k+l}-1]+o(1).
\end{align*}
So we have, with $\gamma:=r/(r+s)$,
$$
\mu^{N}_{[r,s]'}\rightarrow (1-\gamma)\sum_{k=0}^{1} \sum_{l=0}^{1}(\pi^\gamma)_{-1,1}(k, l)[2p_{2k+l}-1]=\mu_{(\gamma,1-\gamma)'},
$$ 
as required.
\end{proof}

\newpage
\begin{newreferences}

\item Choi, S. C. (2020) Spatial Parrondo games with spatially dependent game $A$, \url{http://arxiv.org/abs/2101.01172}.

\item Ethier, S. N. and  Kurtz, T. G. (1986) {\em Markov Processes: Characterization and Convergence}, John Wiley \& Sons, New York.

\item Ethier, S. N. and Lee, J. (2009) Limit theorems for Parrondo’s paradox, \textit{Electron. J. Probab.} \textbf{14} 1827--1862.

\item Ethier, S. N. and Lee, J. (2012a) Parrondo games with spatial dependence, \textit{Fluct. Noise Lett.} \textbf{11} 1250004.

\item Ethier, S. N. and Lee, J. (2012b) Parrondo games with spatial dependence, II, \textit{Fluct. Noise Lett.} \textbf{11} 1250030.

\item Ethier, S. N. and Lee, J. (2013a) Parrondo games with spatial dependence and a related spin system, \textit{Markov Process. Relat. Fields} \textbf{19} 163--194.

\item Ethier, S. N. and Lee, J. (2013b) Parrondo games with spatial dependence and a related spin system, II, \textit{Markov Process. Relat. Fields} \textbf{19} 667--692.

\item Ethier, S. N. and Lee, J. (2015) Parrondo games with spatial dependence, III, \textit{Fluct. Noise Lett.} \textbf{14} 1550039.

\item  Liggett, T. M. (1985) {\em Interacting Particle Systems}, Springer-Verlag, New York.

\item Mihailovi\'c, Z. and Rajkovi\'c, M. (2003) One dimensional asynchronous cooperative Parrondo’s games, \textit{Fluct. Noise Lett.} \textbf{3} L389--L398.

\item Toral, R. (2001) Cooperative Parrondo games, \textit{Fluct. Noise Lett.} \textbf{1} L7--L12.

\item Xie, N.-G., Chen, Y., Ye, Y., Xu, G., Wang, L.-G., and Wang, C. (2011) Theoretical analysis and numerical simulation of Parrondo's paradox game in space, \textit{Chaos Solitons Fractals} \textbf{44} 401--414.

\end{newreferences}
\end{document}